\newcommand{\NN}{\mathbb{N}}
\newcommand{\ZZ}{\mathbb{Z}}
\newcommand{\RR}{\mathbb{R}}
\newcommand{\CC}{\mathbb{C}}
\newcommand{\Le}{\mathsf{L}}
\newcommand{\lcm}{\operatorname{lcm}}
\renewcommand{\vec}[1]{{\bf #1}}
\providecommand{\multi}[1]{\llbracket #1 \rrbracket}
\renewcommand{\o}[1]{\overline{#1}}
\renewcommand{\pmod}[1]{\,(\operatorname{mod} #1)}
\newcommand{\stirling}[2]{\genfrac\{\}{0pt}{}{#1}{#2}}
\renewcommand\>{\rangle}
\newcommand\<{\langle}
\theoremstyle{plain}
\newtheorem{Theorem}[equation]{Theorem}
\newtheorem{Lemma}[equation]{Lemma}
\theoremstyle{definition}
\newtheorem{Example}[equation]{Example}
\begin{document}
\title[Factorization length distribution for affine semigroups III]{Factorization length distribution for affine semigroups~III: modular equidistribution for numerical semigroups with arbitrarily many generators}

\author[S.~Garcia]{Stephan Ramon Garcia}
\address{Department of Mathematics, Pomona College, 610 N. College Ave., Claremont, CA 91711} 
\email{stephan.garcia@pomona.edu}
\urladdr{\url{http://pages.pomona.edu/~sg064747}}

\author[M.~Omar]{Mohamed Omar}
\address{Department of Mathematics, Harvey Mudd College, 301 Platt Blvd., Claremont, CA 91711}
\email{omar@g.hmc.edu}
\urladdr{\url{www.math.hmc.edu/~omar}}

\author[C.~O'Neill]{Christopher O'Neill}
\address{Mathematics Department, San Diego State University, San Diego, CA 92182}
\email{cdoneill@sdsu.edu}
\urladdr{\url{https://cdoneill.sdsu.edu/}}

\author[T.~Wesley]{Timothy Wesley}
\address{Pomona College, Claremont, CA}
\email{tgwa2017@pomona.edu}

\thanks{First author partially supported by NSF grant DMS-1800123. Thanks also to the anonymous referee for their careful reading of the paper.}

\keywords{numerical semigroup, factorization, quasipolynomial\\
\noindent\textit{Mathematics Subject Classification (2010): 20M14, 05E40}}

\begin{abstract}
For numerical semigroups with a specified list of (not necessarily minimal) generators, we describe the asymptotic distribution of factorization lengths with respect to an arbitrary modulus.  In particular, we prove that the factorization lengths are equidistributed across all congruence classes that are not trivially ruled out by modular considerations.
\end{abstract}

\maketitle

\section{Introduction}\label{sec:intro}

In what follows, we let $\NN = \{0,1,2,\ldots\}$ and denote the cardinality of a set $X$ by $|X|$.
A~\emph{numerical semigroup} $S \subset \NN$ is an additive subsemigroup containing $0$.  Each numerical semigroup $S$ admits a finite generating set, and we write 
\begin{equation*}
S = \<n_1, n_2, \ldots, n_k\> = \{a_1n_1 + a_2 n_2 + \cdots + a_kn_k \,:\, a_1,a_2, \ldots, a_k \in \NN\}
\end{equation*}
for the numerical semigroup generated by the distinct positive $n_1 < n_2 < \cdots < n_k$.
Throughout this document, we always assume $S$ has finite complement in $\NN$, or, equivalently, that $\gcd(n_1,n_2, \ldots, n_k) = 1$.  However, we do not assume that $n_1, n_2, \ldots, n_k$ form the unique minimal generating set of $S$ under containment~\cite{NSBook}.  

A \emph{factorization} of $n \in S$ is an expression
\begin{equation*}
n = a_1n_1 + a_2 n_2 + \cdots + a_kn_k
\end{equation*}
of $n$ as a sum of generators of $S$, denoted by the $k$-tuple 
$\vec{a} = (a_1, a_2, \ldots, a_k) \in \NN^k$.  
The \emph{length} of the factorization $\vec{a}$ is 
\begin{equation*}
\|\vec{a}\| = a_1 + a_2 + \cdots + a_k.
\end{equation*}
The \emph{length multiset of $n$}, denoted $\Le\multi{n}$, is the multiset 
with a copy of $\|\vec{a}\|$ for each factorization $\vec{a}$ of $n$.  Recall that a \emph{multiset} 
is a set in which repetition is taken into account; that is, its elements can occur multiple times.  In particular, the cardinality 
$|\Le \multi{n}|$ equals the number of factorizations of $n$.  

Factorizations and their lengths have been studied extensively in numerous contexts, including factorization theory~\cite{nonuniq,GH92,structurethm}, additive combinatorics~\cite{krulldeltaset,numericalfactorsurvey}, discrete optimization~\cite{pisinger1998knapsack,de2013algebraic}, commutative and non-commutative algebra~\cite{modulessurvey,noncommutativefactor}, and algebraic geometry~\cite{abhyankar1967local,barucci1997maximality}.  
Until recently, results concerning the multiplicities of factorization lengths have been surprisingly absent from the literature.  Initiating the study of $\Le\multi{n}$ was ~\cite{lengthdistribution1}, wherein a closed form for the limiting distribution is obtained for three-generator numerical semigroups via careful combinatorial arguments.  In a sequel paper~\cite{GOOY}, measure theory and algebraic combinatorics are used to characterize the distribution for arbitrary numerical semigroups.  
This present manuscript, the third in this series, examines the spread of $\Le\multi{n}$ across congruence classes modulo a fixed positive integer $N$.  More precisely, given $n \in S$, $N \ge 1$, and $i \in \{0, \ldots, N-1\}$, we study the distribution of $\Le\multi{n} \cap (i + N\ZZ)$.  

Before we introduce the main theorem, let us briefly recall the constant
\begin{equation*}
\delta = \gcd(n_2 - n_1,\, n_3-n_2,\ldots, \,n_k - n_{k-1})
\end{equation*}
and its relation to factorization lengths.  Given $n \in S$, all lengths $\ell_1, \ell_2 \in \Le\multi{n}$ must satisfy $\ell_1 \equiv \ell_2 \pmod \delta$ (in fact, $\delta$ is the largest integer with this property by \cite[Prop.~2.9]{bowles2006delta}).  In particular, the intersection $\Le\multi{n} \cap (i + N\ZZ)$ is sometimes empty, even for arbitrarily large $n$.  

Let us consider an example.  For the numerical semigroup $S = \<7, 19, 25, 31\>$, every factorization length of $n = 434$ is congruent modulo $\delta = 6$.  More specifically, since $n = 62 \, n_1$, every length in $\Le\multi{n}$ is congruent to $2$ modulo $6$.  As such, for~$N = 4$, only the intersections $\Le\multi{n} \cap 4\ZZ$ and $\Le\multi{n} \cap (2 + 4\ZZ)$ can be nonempty.  If, on the other hand, we had chosen $n$ odd, then every element of $\Le\multi{n}$ must be congruent to either $1$ or $3$ modulo $4$.  
Among other things, Theorem~\ref{Theorem:Moments} below implies that in this example, for $n$ large and odd, the multisets $\Le\multi{n} \cap (1 + N\ZZ)$ and $\Le\multi{n} \cap (3 + N\ZZ)$ have identical distributions.  

To state Theorem~\ref{Theorem:Moments}, we require some algebraic terminology.
The \emph{complete homogeneous symmetric polynomial} of degree $p$ in the
$k$ variables $x_1, x_2, \ldots, x_k$ is
\begin{equation*}
h_p(x_1,x_2,\ldots,x_k) \quad=\!\! 
\sum_{1 \leq \alpha_1 \leq \cdots \leq \alpha_{p} \leq k} x_{\alpha_1} x_{\alpha_2}\cdots x_{\alpha_p},
\end{equation*}
the sum of all degree $p$ monomials in $x_1,x_2,\ldots,x_k$.  A \emph{quasipolynomial} of degree $d$ is a function $f:\ZZ\to\CC$ of the form
\begin{equation*}
f(n) = c_d(n) n^d + c_{d-1}(n) n^{d-1} + \cdots + c_1(n) n + c_0(n),
\end{equation*}
in which the coefficients $c_1(n), c_2(n),\ldots, c_d(n)$
are periodic functions of $n \in \ZZ$ \cite{continuousdiscretely}.  A \emph{quasirational function} is a quotient of two quasipolynomials.

\begin{Theorem}\label{Theorem:Moments}
Let $S = \<n_1,n_2,\ldots,n_k\>$ with $\gcd(n_1,n_2,\ldots,n_k)=1$ and define $\delta$ as above.  Fix $N \in \NN$ and let $m = \gcd(\delta, N)$.  
Then 
\begin{equation*}
\sum_{\substack{\ell \in \Le\multi{n} \\ \ell \equiv i \pmod{N}}} \!\!\!\!\!\!\!\! \ell^p \,\, =\,\,
\begin{cases}
\dfrac{p! \,m\, h_p ( \frac{1}{n_1}, \frac{1}{n_2}, \ldots, \frac{1}{n_k} )}{N(k + p - 1)! (n_1 \cdots n_k)}  n^{k+p-1} + w_i(n)
  & \text{if $n \equiv i n_1 \pmod{ m }$}, \\[10pt]
0 &\text{if $n \not \equiv i n_1 \pmod{ m}$},
\end{cases}
\end{equation*}
in which $w_i(n)$ is a quasipolynomial of degree at most $k+p-2$ whose coefficients are rational valued and have period dividing 
$N\lcm(n_1,n_2,\ldots,n_k)$.
\end{Theorem}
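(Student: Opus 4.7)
My plan is to combine the bivariate generating function
\[
G(x,t) = \prod_{r=1}^k \frac{1}{1 - x^{n_r}t},
\]
whose coefficient of $x^n t^\ell$ counts factorizations of $n$ of length $\ell$, with a discrete Fourier filter on the length variable. Writing $\omega = e^{2\pi i/N}$,
\[
S_i(n,p) := \!\!\!\!\sum_{\substack{\ell \in \Le\multi{n}\\ \ell \equiv i \pmod{N}}}\!\!\!\!\ell^p \;=\; \frac{1}{N}\sum_{j=0}^{N-1}\omega^{-ij}\,[x^n]\,\bigl[(t\partial_t)^p G(x,t)\bigr]_{t=\omega^j},
\]
the operator $(t\partial_t)^p$ producing the weight $\ell^p$ and the roots-of-unity projection enforcing $\ell \equiv i \pmod{N}$.

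First I would settle the zero case. Since $\delta \mid n_r - n_1$, every factorization $\vec{a}$ of $n$ satisfies $n \equiv n_1\|\vec{a}\| \pmod{\delta}$; combining $\gcd(n_1,\ldots,n_k)=1$ with $\delta \mid n_r - n_1$ gives $\gcd(n_1,\delta)=1$, so all lengths of $n$ lie in the single residue $n_1^{-1}n \pmod{\delta}$, and $\Le\multi{n}\cap(i+N\ZZ)$ is empty (hence $S_i(n,p)=0$) unless $n \equiv i n_1 \pmod{m}$. For the remaining case, I would analyze the $x$-poles of each summand $[(t\partial_t)^p G]_{t=\omega^j}$: they lie at roots of unity $\zeta$ with $\zeta^{n_r}=\omega^{-j}$ for some $r$, and the pole order at $\zeta$ equals $|R_\zeta|+p$ where $R_\zeta := \{r : \zeta^{n_r}=\omega^{-j}\}$. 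Reaching the maximum $|R_\zeta|=k$ forces $\zeta^\delta=1$ together with $\zeta^{n_1}=\omega^{-j}$; since $\gcd(n_1,\delta)=1$ the $n_1$-power map is a bijection on $\mu_\delta$, and the set $\{\zeta \in \mu_\delta : \zeta^{n_1} \in \mu_N\}$ is exactly $\mu_m$. Therefore precisely $m$ indices $j \in \{0,\ldots,N-1\}$ contribute a top-order pole of order $k+p$, one per $\zeta \in \mu_m$, while every other pole of every summand has order at most $k+p-1$.

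Around each dominant pair $(\zeta,j)$ with $\omega^j = \zeta^{-n_1}$, I would Laurent-expand using $1 - x^{n_r}\omega^j \sim -(n_r/\zeta)(x-\zeta)$, which gives $G(x,\omega^j) \sim (-\zeta)^k/\bigl((n_1\cdots n_k)(x-\zeta)^k\bigr)$ and $(t\partial_t)^q \log G \mid_{t=\omega^j} \sim (-\zeta)^q(q-1)!\,p_q(1/n_1,\ldots,1/n_k)/(x-\zeta)^q$, where $p_q$ denotes the $q$-th power sum. Writing $(t\partial_t)^p G = G \cdot B_p\bigl((t\partial_t)\log G, (t\partial_t)^2 \log G, \ldots, (t\partial_t)^p \log G\bigr)$, invoking weighted homogeneity of the Bell polynomial $B_p$, and appealing to the exponential-formula identity $p!\, h_p = B_p(0!\, p_1,\, 1!\, p_2, \ldots, (p-1)!\, p_p)$ (which follows from $\exp \sum_{q \ge 1} p_q z^q/q = \sum_p h_p z^p$), the leading Laurent coefficient at $(x-\zeta)^{-(k+p)}$ works out to $(-\zeta)^{k+p}\, p!\, h_p(1/n_1,\ldots,1/n_k)/(n_1 \cdots n_k)$. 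Extracting $[x^n]$, multiplying by $\omega^{-ij} = \zeta^{i n_1}$, and summing over $\zeta \in \mu_m$ yields
\[
\frac{1}{N} \cdot \frac{p!\, h_p(1/n_1,\ldots,1/n_k)}{(k+p-1)!\, (n_1 \cdots n_k)}\, n^{k+p-1} \sum_{\zeta \in \mu_m} \zeta^{i n_1 - n},
\]
and the inner sum equals $m$ when $n \equiv i n_1 \pmod{m}$ and $0$ otherwise, matching the stated leading term. All remaining contributions come from poles of order at most $k+p-1$ at roots of unity $\zeta$ satisfying $\zeta^{n_r N} = 1$ for some $r$; these assemble into a quasipolynomial $w_i(n)$ of degree at most $k+p-2$ whose period divides $N \lcm(n_1,\ldots,n_k)$, and rationality of its coefficients follows because $S_i(n,p) \in \NN$ and the leading term is rational, so $w_i(n) \in \QQ$ for every $n$, whence interpolation forces the quasipolynomial coefficients themselves to be rational. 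The main obstacle is the leading-coefficient computation—verifying that the Bell-polynomial recombination of power sums collapses exactly to $p!\, h_p$ and that the $\mu_m$ character sum produces the factor $m$—both of which depend essentially on $\gcd(n_1,\delta)=1$; the pole-order bookkeeping for the remaining $(j,\zeta)$ pairs is routine.
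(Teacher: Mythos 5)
Your argument is correct and reaches the theorem by the same broad strategy the paper uses — the bivariate generating function $\prod_r (1-z^{n_r}w)^{-1}$, a roots-of-unity filter on the length variable, a pole analysis at roots of unity, and a closing exponential sum that produces the factor $m$ or $0$ — but the technical engine for the leading residue and the bookkeeping of dominant poles are genuinely different. The paper expands $(w\partial_w)^p = \sum_a \stirling{p}{a} w^a \partial_w^a$ using Stirling numbers of the second kind, feeds in the closed form for $\partial_w^a f$ in terms of $h_a$, and evaluates the residue by an explicit L'H\^opital limit; you instead invoke the Fa\`a di Bruno identity $(t\partial_t)^p G = G\cdot B_p\bigl((t\partial_t)\log G,\ldots,(t\partial_t)^p\log G\bigr)$, use weighted homogeneity of the complete Bell polynomial, and collapse the Laurent coefficient via the Newton identity $\exp\bigl(\sum_{q\ge1} p_q z^q/q\bigr)=\sum_p h_p z^p$, which makes the appearance of $p!\,h_p$ an immediate consequence of classical symmetric-function theory rather than a by-product of an induction. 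For identifying the dominant poles, the paper introduces the subgroup $\Gamma=\{t\in\ZZ/N\ZZ : n_1 t\equiv\cdots\equiv n_k t\pmod N\}$, shows $|\Gamma|=m$, constructs an automorphism $\alpha$ of $\Gamma$, and proves a common-zero lemma; your argument via the sets $R_\zeta$ and bijectivity of the $n_1$-power map on $\mu_\delta$ (using $\gcd(n_1,\delta)=1$) reaches the same count of $m$ dominant $(j,\zeta)$ pairs more directly, though the paper's $\Gamma$-formalism dovetails a bit more cleanly with its residue notation $\zeta^{i\alpha(t)-tn}$. Your rationality argument for $w_i(n)$ — Lagrange interpolation from integrality of $\Lambda^p_{i,N}(n)$ and rationality of the leading term — is also cleaner and more self-contained than the paper's appeal to rationality of the intermediate quasipolynomials $u(n)$, $v(n)$. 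The one place to be slightly more explicit is your claim that "weighted homogeneity" lets you substitute only the leading Laurent parts of $(t\partial_t)^q\log G$ into $B_p$: this is correct, but deserves a sentence noting that any monomial in $B_p$ has weighted degree exactly $p$, so subleading terms of the arguments only feed poles of order strictly below $k+p$.
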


The choice of $m$ in Theorem~\ref{Theorem:Moments} implies
\begin{equation}\label{eq:AllSameMod}
n_1 \equiv n_2 \equiv \cdots \equiv n_k \pmod{m}.
\end{equation}
Together with the assumption $\gcd(n_1,n_2,\ldots,n_k)=1$, it follows that $n_1,n_2,\ldots,n_k$ are invertible modulo $m$.
Since $n_1$ is invertible modulo $m$,
there is a unique $i$ modulo $m$ such that $n \equiv i n_1 \pmod{ m}$.  Because $m \mid N$,
there are $N/m$ distinct values of $i$ modulo $N$ for which this occurs.
As such, Theorem~\ref{Theorem:Moments} yields (depending upon the parameters involved) $m/N$ or $0$ times the corresponding result from \cite[Thm.~2]{GOOY} on $\sum_{\ell \in \Le\multi{n}} \ell^p$.

Define a sequence of probability measures on $[0,1]$ by
\begin{equation*}
\nu_n = \frac{1}{| \Le \multi{n}|}\sum_{\ell \in \Le\multi{n}} \delta_{\ell/n},
\end{equation*}
in which $\delta_x$ denotes the point mass at $x$ (not to be confused with the number $\delta$ defined
earlier; both notations are standard and unavoidable).  As shown in \cite[Thm.~1]{GOOY},
these measures converge weakly to an absolutely continuous measure whose probability density
function is a certain Curry--Schoenberg B-spline.  Theorem~\ref{Theorem:Moments}
permits us to obtain a result analogous to \cite[Thm.~1]{GOOY}, with all results scaled by $m/N$ or $0$ depending on the relevant congruence class.

\begin{Theorem}\label{Theorem:Probability}
Let $S = \<n_1,n_2,\ldots,n_k\>$, in which $k \geq 3$ and $\gcd(n_1,n_2,\ldots,n_k)=1$, and define $\delta$ as above.  Fix $N \in \NN$, and let $m = \gcd(\delta, N)$.  
\begin{enumerate}
\item For real $\alpha < \beta$,
\begin{align*}
\lim_{n\to\infty} \frac{ |\{ \ell \in \Le\multi{n} : \ell \equiv i \pmod{N}, \, \,  \ell \in[\alpha n, \beta n]\} |}{ | \Le\multi{n} |} 
=
\begin{cases}
\displaystyle \frac{m}{N}  \int_{\alpha}^{\beta} F(t)\,dt & \text{if $n \equiv i n_1 \pmod{ m }$},\\[8pt]
 0&\text{if $n \not \equiv i n_1 \pmod{ m}$},
\end{cases}
\end{align*}
where $F:\RR\to\RR$ is the probability density function 
\begin{equation*}
F(x) :=\frac{(k-1)n_1n_2\cdots n_k}{2}\sum_{r=1}^k \frac{|1-n_rx|(1-n_r x)^{k-3}}{\prod_{j\neq r}(n_j-n_r)}.
\end{equation*}
The support of $F$ is $\big[\frac{1}{n_k}, \frac{1}{n_1}\big]$.  

\item For any continuous function $g:(0,1)\to\CC$,
\begin{equation*}
\lim_{n\to\infty}  \frac{1}{ |\Le\multi{n}|} \sum_{\substack{\ell \in \Le\multi{n} \\ \ell \equiv i \pmod{N}}} 
 g\bigg(\frac{\ell}{n}\bigg) 
 =
\begin{cases}
\displaystyle \frac{m}{N}  \int_0^1 g(t) F(t)\,dt & \text{if $n \equiv i n_1 \pmod{ m }$},\\[8pt]
 0&\text{if $n \not \equiv i n_1 \pmod{ m}$},
\end{cases}
\end{equation*}
\end{enumerate}
\end{Theorem}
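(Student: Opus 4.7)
The plan is to derive both parts from Theorem~\ref{Theorem:Moments} via the method of moments. For each $n \in S$, introduce the finite positive measure
\[
\nu_n^{(i)} := \frac{1}{|\Le\multi{n}|} \sum_{\substack{\ell \in \Le\multi{n} \\ \ell \equiv i \pmod{N}}} \delta_{\ell/n}
\]
on $\RR$. Since any factorization $\vec{a}$ of $n$ satisfies $n_1 \|\vec{a}\| \leq n \leq n_k \|\vec{a}\|$, every $\nu_n^{(i)}$ is supported in the fixed compact interval $I := [1/n_k, 1/n_1] \subset (0,1)$. Part~(b) is the assertion that $\int g \, d\nu_n^{(i)}$ converges to $\tfrac{m}{N}\int_0^1 g(t) F(t)\, dt$ or to $0$ according to the congruence class of $n \pmod m$, while part~(a) is the same statement applied to the indicator $g = \mathbf{1}_{[\alpha,\beta]}$.

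For each $p \in \NN$, the $p$-th moment of $\nu_n^{(i)}$ is
\[
\int t^p\, d\nu_n^{(i)}(t) = \frac{1}{|\Le\multi{n}|\, n^p} \sum_{\substack{\ell \in \Le\multi{n} \\ \ell \equiv i \pmod{N}}} \ell^p.
\]
Theorem~\ref{Theorem:Moments} applied with the given $(N,p)$ provides the leading asymptotic of the numerator, while the same theorem with $(N,p) = (1,0)$ (for which $m = 1$ and the congruence condition is vacuous) gives $|\Le\multi{n}| \sim n^{k-1}/((k-1)!\, n_1 \cdots n_k)$. Dividing, for $n \equiv i n_1 \pmod m$,
\[
\lim_{n \to \infty} \int t^p\, d\nu_n^{(i)}(t) = \frac{m}{N}\cdot \frac{p!\,(k-1)!\, h_p(1/n_1,\ldots,1/n_k)}{(k+p-1)!},
\]
and the integral is identically zero otherwise. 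The fraction following $m/N$ is precisely $\int_0^1 t^p F(t)\, dt$; this identification follows by combining the $N = 1$ case of Theorem~\ref{Theorem:Moments} with the weak convergence $\nu_n \to F(t)\,dt$ established in \cite[Thm.~1]{GOOY}, since compactly supported weak limits are consistent with moments.

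To upgrade moment convergence to the statements of the theorem, observe that all $\nu_n^{(i)}$ are supported on the common compact set $I$ with total mass bounded by $1$. Given any continuous $g$ on $I$ and $\varepsilon > 0$, Weierstrass approximation supplies a polynomial $P$ with $\|g - P\|_{\infty, I} < \varepsilon$; the uniform mass bound then converts polynomial convergence into convergence of $\int g\, d\nu_n^{(i)}$. Since any continuous $g:(0,1)\to\CC$ restricts to such a function on $I$, this proves part~(b). For part~(a), continuity of $F$ makes $\{\alpha, \beta\}$ a null set for the limit $\tfrac{m}{N}F(t)\,dt$, so the Portmanteau theorem gives $\nu_n^{(i)}([\alpha,\beta]) \to \tfrac{m}{N}\int_\alpha^\beta F(t)\,dt$. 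I do not expect a serious obstacle: the substantive work is the modular moment asymptotic, which is exactly the content of Theorem~\ref{Theorem:Moments}, and every step after that is standard measure-theoretic bookkeeping facilitated by the compact support of the measures.
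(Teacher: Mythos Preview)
Your proposal is correct and follows essentially the same approach the paper intends: the paper does not give a standalone proof of Theorem~\ref{Theorem:Probability} but simply remarks that Theorem~\ref{Theorem:Moments} ``permits us to obtain a result analogous to \cite[Thm.~1]{GOOY}, with all results scaled by $m/N$ or $0$,'' deferring the measure-theoretic passage from moments to weak convergence to the argument already carried out in \cite{GOOY}. Your write-up fills in exactly those details---compact support in $[1/n_k,1/n_1]$, moment computation via Theorem~\ref{Theorem:Moments}, identification of the limiting moments with $\int t^p F(t)\,dt$ through \cite[Thm.~1]{GOOY}, Weierstrass approximation for part~(b), and Portmanteau for part~(a)---and does so soundly.
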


Before proceeding to the proof of Theorem \ref{Theorem:Moments} in Section \ref{Section:Proof}, we first examine some applications and examples in Section \ref{Section:Applications}.  We conclude in 
Section \ref{Section:Conclusion} an open question that provides a possible avenue for future research.

\section{Applications and Examples}\label{Section:Applications}

Theorem \ref{Theorem:Moments} is remarkable since it applies to all numerical semigroups
and all moduli.  Consequently, there is a lot to explore and comment on.

\begin{Example}
The generality of~\cite[Thm.~1 and~2]{GOOY} yielded asymptotic descriptions of numerous statistics of $\Le\multi{n}$, including the mean, median, mode, variance, standard deviation, and skewness, as well as the harmonic and geometric means; see \cite[Section~2.1]{GOOY} for precise statements.  As an example, the mean $m_1(n)$ of $\Le\multi{n}$ was shown to satisfy
\begin{equation*}
m_1(n) = \frac{1}{|\Le\multi{n}|}\sum_{\ell \in \Le\multi{n}} \ell
\,\sim\,
\frac{n}{k}\bigg(\frac{1}{n_1} + \cdots + \frac{1}{n_k}\bigg)
\end{equation*}
as $n \to \infty$.  
As a consequence of Theorems~\ref{Theorem:Moments} and~\ref{Theorem:Probability}, we immediately obtain analogous asymptotic descriptions of each statistic for the intersection of $\Le\multi{n}$ with any fixed congruence class modulo $N$.  In particular, $\Le\multi{n} \cap (i + N\ZZ)$ is empty unless $n \equiv in_1 \pmod m$, in which case for $n$ large,
\begin{equation*}
\frac{1}{|\Le\multi{n}|}\sum_{\substack{\ell \in \Le\multi{n} \\ \ell \equiv i \pmod{N}}} \ell
\,\sim\,
\frac{m}{N} \cdot \frac{n}{k}\bigg(\frac{1}{n_1} + \cdots + \frac{1}{n_k}\bigg).
\end{equation*}
\end{Example}

\begin{Example}
The simplest case of Theorem~\ref{Theorem:Moments} is $N = 1$, which forces $m=1$.  Since $\ell \equiv i \pmod{1}$ for all 
integers $\ell$ and $i$, we obtain \cite[Thm.~2]{GOOY}:
\begin{equation}\label{eq:GOOY}
\sum_{\ell \in \Le\multi{n}} \ell^p = 
 \dfrac{p! \, h_p ( \frac{1}{n_1}, \frac{1}{n_2}, \ldots, \frac{1}{n_k} )}{(k + p - 1)! (n_1 n_2 \cdots n_k)}  n^{k+p-1} + w(n),
\end{equation}
in which $w(n)$ is a quasipolynomial of degree at most $k+p-2$ whose coefficients are rational valued and have period dividing $\lcm(n_1,n_2,\ldots,n_k)$.
\end{Example}

\begin{table}
\small
\begin{equation*}
\begin{array}{c|c|c}
\text{Residue} & \text{Count} & \text{Proportion}\\
\hline
0 \pmod{ 2 } &   233 &   0.5011 \\
1 \pmod{ 2 } &   232 &   0.4989 \\
\hline
0 \pmod{ 3 } &   155 &   0.3333 \\
1 \pmod{ 3 } &   155 &   0.3333 \\
2 \pmod{ 3 } &   155 &   0.3333\\
\hline
0 \pmod{ 4 } &   115 &   0.2473 \\
1 \pmod{ 4 } &   116 &   0.2495 \\
2 \pmod{ 4 } &   118 &   0.2538 \\
3 \pmod{ 4 } &   116 &   0.2496\\
\hline
0 \pmod{ 5 } &   94 &   0.2022\\
1 \pmod{ 5 } &   93 &   0.2000\\
2 \pmod{ 5 } &   93 &   0.2000\\
3 \pmod{ 5 } &   92 &   0.1978\\
4 \pmod{ 5 } &   93 &   0.2000\\
\hline
0 \pmod{ 6 } &   77 &   0.1656\\
1 \pmod{ 6 } &   77 &   0.1656\\
2 \pmod{ 6 } &   78 &   0.1678\\
3 \pmod{ 6 } &   78 &   0.1678\\
4 \pmod{ 6 } &   78 &   0.1677\\
5 \pmod{ 6 } &   77 &   0.1656\\
\end{array}
\qquad
\begin{array}{c@{}c|c|c}
\text{Residue} && \text{Count} & \text{Proportion}\\
\hline
0 \pmod{ 7 } &&   72 &   0.1548\\
1 \pmod{ 7 } &&   73 &   0.1570\\
2 \pmod{ 7 } &&   59 &   0.1269\\
3 \pmod{ 7 } &&   62 &   0.1333\\
4 \pmod{ 7 } &&   64 &   0.1376\\
5 \pmod{ 7 } &&   66 &   0.1419\\
6 \pmod{ 7 } &&   69 &   0.1484\\
\hline
0 \pmod{ 8 } &&   58 &  0.1247\\
1 \pmod{ 8 } &&   58 &  0.1247 \\
2 \pmod{ 8 } &&   59 &   0.1269\\
3 \pmod{ 8 } &&   58 &   0.1247\\
4 \pmod{ 8 } &&   57 &   0.1226\\
5 \pmod{ 8 } &&   58 &   0.1247\\
6 \pmod{ 8 } &&   59 &   0.1269\\
7 \pmod{ 8 } &&   58 &   0.1247\\
\hline
\\
\\
\\
\\
\\
\end{array}
\end{equation*}
\caption{The McNugget semigroup $\<6,9,20\>$ has $\delta = 1$.
For each modulus $N$ and residue $i \pmod{N}$, the proportion of lengths $\ell \in \Le\multi{n}$ with $\ell \equiv i \pmod{N}$ tends to $1/N$.  Even for $n=1{,}000$, as depicted above, this behavior is evident.}
\label{Table:McNugget}
\end{table}

\begin{table}
\small
\begin{equation*}
\begin{array}{c|c|c}
\text{Residue} & \text{Count} & \text{Proportion}\\
\hline
0 \pmod{ 2 } &   14500 &   1.0000 \\
1 \pmod{ 2 } &   0     &   0.0000 \\
\hline
0 \pmod{ 3 } &   0     &   0.0000 \\
1 \pmod{ 3 } &   14500 &   1.0000 \\
2 \pmod{ 3 } &   0     &   0.0000 \\
\hline
0 \pmod{ 4 } &   7349  &   0.5068 \\
1 \pmod{ 4 } &   0     &   0.0000 \\
2 \pmod{ 4 } &   7151  &   0.4932 \\
3 \pmod{ 4 } &   0     &   0.0000 \\
\hline
0 \pmod{ 5 } &   2890  &   0.1993 \\
1 \pmod{ 5 } &   2910  &   0.2007 \\
2 \pmod{ 5 } &   2909  &   0.2006 \\
3 \pmod{ 5 } &   2888  &   0.1992 \\
4 \pmod{ 5 } &   2903  &   0.2002 \\
\hline
0 \pmod{ 6 } &   0     &   0.0000 \\
1 \pmod{ 6 } &   0     &   0.0000 \\
2 \pmod{ 6 } &   0     &   0.0000 \\
3 \pmod{ 6 } &   0     &   0.0000 \\
4 \pmod{ 6 } &   14500 &   1.0000 \\
5 \pmod{ 6 } &   0     &   0.0000 \\
\end{array}
\qquad
\begin{array}{c@{}c|c|c}
\text{Residue} && \text{Count} & \text{Proportion}\\
\hline
0 \pmod{ 7 } &&   2096  &   0.1446 \\
1 \pmod{ 7 } &&   2094  &   0.1444 \\
2 \pmod{ 7 } &&   2045  &   0.1411 \\
3 \pmod{ 7 } &&   2031  &   0.1401 \\
4 \pmod{ 7 } &&   2066  &   0.1424 \\
5 \pmod{ 7 } &&   2084  &   0.1437 \\
6 \pmod{ 7 } &&   2084  &   0.1437 \\
\hline
0 \pmod{ 8 } &&   3682  &   0.2539 \\
1 \pmod{ 8 } &&   0     &   0.0000 \\
2 \pmod{ 8 } &&   3578  &   0.2468 \\
3 \pmod{ 8 } &&   0     &   0.0000 \\
4 \pmod{ 8 } &&   3667  &   0.2529 \\
5 \pmod{ 8 } &&   0     &   0.0000 \\
6 \pmod{ 8 } &&   3573  &   0.2464 \\
7 \pmod{ 8 } &&   0     &   0.0000 \\
\hline
\\
\\
\\
\\
\\
\end{array}
\end{equation*}
\caption{The semigroup $\<17, 29, 47, 65\>$ has $\delta = 6$.
The possible congruence classes for $\ell \in \Le\multi{n}$ depends on $N$, but those residues that can be attained are attained evenly as $n \to \infty$.  Given above are the counts for $n=5{,}000$.}
\label{Table:BiggerDelta}
\end{table}

\begin{Example}
Fix a modulus $N$ and suppose that $m=1$, that is, $N$ is relatively prime to $\delta$.  Then the second case in 
Theorem~\ref{Theorem:Moments} does not apply and hence
\begin{equation*}
\sum_{\substack{\ell \in \Le\multi{n} \\ \ell \equiv i \pmod{N}}} \!\!\!\!\!\!\!\! \ell^p \,\, =\,\,
 \dfrac{p! \, h_p ( \frac{1}{n_1}, \frac{1}{n_2}, \ldots, \frac{1}{n_k} )}{N(k + p - 1)! (n_1 n_2 \cdots n_k)}  n^{k+p-1} + w_i(n),
\end{equation*}
for $i=0,1,2,\ldots,N-1$, in which $w_i(n)$ is a quasipolynomial of degree at most $k+p-2$ with rational-valued coefficients and period dividing $\lcm(n_1,n_2,\ldots,n_k)$.  In particular, factorization lengths are asymptotically equally distributed across all $N$ equivalence classes. 
\end{Example}

\begin{Example}
For a fixed numerical semigroup, this modular equidistribution 
phenomenon occurs for all moduli $N \ge 1$ if and only if $\delta = 1$, since this ensures there is no
prime $p$ such that 
\begin{equation*}
n_1 \equiv n_2 \equiv \cdots \equiv n_k \pmod{p}.
\end{equation*}
This holds, for example, for the McNugget semigroup $\<6,9,20\>$.  As Table~\ref{Table:McNugget} illustrates, the equidistribution across congruence classes modulo $N$ is apparent even for relatively small values of $n$.  The semigroup $\<17, 29, 47, 65\>$, on the other hand, has $\delta = 6$, and the distributions are depicted in Table~\ref{Table:BiggerDelta}.  
\end{Example}

\begin{Example} 
For each prime $p$ and positive integer $k$, there are precisely $p^k-1$ admissible $k$-tuples 
$(n_1,n_2,\ldots,n_k) \pmod{p}$
that occur as $\<n_1,n_2,\ldots,n_k\>$ ranges over all $k$-generator numerical semigroups;
the requirement that $\gcd(n_1,n_2,\ldots,n_k) = 1$ ensures that $n_1 \equiv n_2 \equiv \cdots \equiv n_k \equiv 0 \pmod{p}$ is not possible.
Each of these $p^k-1$ possible $k$-tuples occurs with equal likelihood.  Of these, there are precisely $p-1$ ``bad'' $k$-tuples that yield numerical semigroups for which $\delta \ne 1$; these are the $k$-tuples whose entries are all equal to $i$ for some $1 \leq i \leq p-1$.  Thus, the probability\footnote{The fact that $\NN^k$ does not admit a uniform probability distribution can be remedied by studying numerical semigroups whose generators are at most a given threshold $R$, as in~\cite{arnold,bourgainsinai}.  The infinite products should be replaced by products over primes $p\leq f(R)$, in which $f(R)$ is a suitable function that tends to infinity as $R \to \infty$.  Letting $R \to \infty$ yields the desired result.  Note that other models for randomly selecting numerical semigroups have been studied as well~\cite{RNS}.}
that randomly selected numerical semigroup generators $n_1,n_2,\ldots,n_k$ are not mutually congruent modulo $p$ is
\begin{equation*}
1 - \frac{p-1}{p^k-1} = \frac{(p^k-1)-(p-1)}{p^k-1} = \frac{p^k-p}{p^k-1} = \frac{1 - \frac{1}{p^{k-1} }}{1 - \frac{1}{p^k}}.
\end{equation*}
The Chinese remainder theorem and the Euler product formula 
imply that the probability that
$\<n_1,n_2,\ldots,n_k\>$ fails to satisfy $\delta = 1$ is
\begin{equation*}
\prod_p \Bigg(\frac{1 - \frac{1}{p^{k-1} }}{1 - \frac{1}{p^k}} \Bigg)
= \prod_p \Bigg( \frac{1}{1 - \frac{1}{p^k}} \Bigg) \Bigg\slash\prod_p \Bigg( \frac{1}{1 - \frac{1}{p^{k-1}}} \Bigg) \\
= \frac{ \zeta(k)}{\zeta(k-1)},
\end{equation*}
in which 
\begin{equation*}
\zeta(s) = \sum_{n=1}^{\infty} \frac{1}{n^s}
\end{equation*}
denotes the Riemann zeta function and the products run over all prime numbers.

Since $\lim_{k\to\infty} \zeta(k) = 1$, the probability that a $k$-generator
numerical semigroup satisfies $\delta = 1$ tends to $1$ as $k\to \infty$; see Table \ref{Table:Zeta}.  This is intuitively clear, since the more generators a semigroup has, the more unlikely it is that they will be mutually congruent modulo some prime.  For $k=2$, the pole of $\zeta$ at $1$ ensures that the probability that a $2$-generator numerical semigroup has $\delta = 1$ is $0$.  This makes sense:\ the only way that $\<n_1,n_2\>$ can have $\delta = 1$ is for $n_2 = n_1+1$, and this is highly unlikely.      
\end{Example}

\begin{table}
\begin{equation*}
\begin{array}{c|ccccccccc}
k &  2 & 3 & 4 & 5 & 6 & 7 & 8 & 9 & 10 \\
 \hline
\frac{ \zeta(k)}{\zeta(k-1)} &  0 & 0.7308 & 0.9004 & 0.9581 & 0.9811 & 0.9912 & 0.9958 & 0.9979 & 0.9990 \\
\end{array}
\end{equation*}
\caption{Probability $\zeta(k) / \zeta(k-1)$ that a $k$-generator numerical semigroup has $\delta = 1$.}
\label{Table:Zeta}
\end{table}

\section{Proof of Theorem \ref{Theorem:Moments}}\label{Section:Proof}
Let $S = \<n_1,n_2,\ldots,n_k\>$ with $\gcd(n_1,n_2,\ldots,n_k)=1$, let $N \in \NN$, define $\delta$ as above, and let
$m = \gcd(\delta, N)$ denote the largest divisor of $N$ such that 
$n_1 \equiv n_2 \equiv \cdots \equiv n_k \pmod{ m}$.  Define
\begin{equation}\label{eq:P}
\Lambda^p_{i,N}(n) := \sum_{\substack{\ell \in \Le\multi{n} \\ \ell \equiv i \pmod{N}}} \!\!\!\!\!\!\!\! \ell^p.
\end{equation}
The associated generating function is
\begin{equation}\label{eq:F1}
F(z):= \sum_{n=0}^{\infty} z^n \Lambda^p_{i,N}(n).
\end{equation}
In the computations that follow, $\zeta$ denotes a primitive $N$th root of unity (not to be confused with the Riemann zeta function) and $i$
an index (not to be confused for the imaginary unit).

\subsection{A two-variable generating function}\label{subsection:generating}
Define
\begin{equation*}
f(z,w) := \prod_{i=1}^k \frac{1}{1 - wz^{n_i}},
\end{equation*}
which satisfies
\begin{align}
f(z,w)
&= \prod_{i=1}^k (1 + wz^{n_i} + w^2z^{2n_i} + \cdots) \nonumber \\
&= \sum_{a_1,a_2,\ldots,a_k \geq 0} w^{a_1 + a_2 + \cdots + a_k}\, z^{a_1 n_1 + a_2 n_2 + \cdots + a_k n_k} \nonumber \\
&= \sum_{n=0}^\infty z^n \sum_{\ell=0}^\infty (\text{\# of factorizations of $n$ of length $\ell$}) w^{\ell} \nonumber \\  
&= \sum_{n=0}^\infty z^n \sum_{\ell \in \Le\multi{n}} w^{\ell}. \nonumber
\end{align}
Observe that for $p\in \NN$, 
\begin{equation}\label{eq:fuzz}
\left( \! w \frac{\partial}{\partial w} \! \right)^{\!\!p} \!\! f(z,w) 
=\sum_{n=0}^\infty z^n \sum_{\ell \in \Le[n]}  w^{\ell} \ell^p.
\end{equation}

\subsection{Fourier inversion}
We claim that
\begin{equation}\label{eq:F2}
F(z) = \frac{1}{N} \sum_{j=0}^{N-1} \zeta^{ij} F^p_{j,N}(z),
\end{equation}
in which
\begin{equation}\label{eq:fm}
F^p_{j,N}(z)  := \bigg(\bigg(  w \dfrac{\partial}{\partial w} \bigg)^{\!\!p} f(z,w) \bigg) \bigg|_{w=\overline{\zeta}^{j}} .
\end{equation}
To prove this result, let
\begin{equation}\label{eq:Rmin}
V^p_{i,N}(n) := 
\sum_{j=0}^{N-1} \zeta^{-ij}  \Lambda^p_{j,N}(n)
=\sum_{\ell \in \Le\multi{n}} \zeta^{-i\ell} \ell^p
\end{equation}
and then use Fourier inversion to obtain
\begin{equation}\label{eq:lincomb}
\Lambda^p_{i,N}(n) 
= \frac{1}{N}  \sum_{j=0}^{N-1} \zeta^{ij}  V^p_{j,N}(n). 
\end{equation}
Consequently,
\begin{align*}
F(z)
&= \sum_{n=0}^{\infty} z^n  \Lambda^p_{i,N}(n) && \text{(by \eqref{eq:F1})} \\
&= \sum_{n=0}^{\infty} z^n \left( \frac{1}{N}  \sum_{j=0}^{N-1} \zeta^{ij}  V^p_{j,N}(n) \right) &&\text{(by \eqref{eq:lincomb})} \\
&= \frac{1}{N} \sum_{j=0}^{N-1}  \left( \zeta^{ij}   \sum_{n=0}^{\infty} z^n V^p_{j,N}(n) \right) \\
&= \frac{1}{N} \sum_{j=0}^{N-1}  \left( \zeta^{ij}   \sum_{n=0}^{\infty} z^n \sum_{\ell \in \Le\multi{n}} \zeta^{-j\ell} \ell^p\right) &&\text{(by \eqref{eq:Rmin})}\\
&= \frac{1}{N}\sum_{j=0}^{N-1}    \zeta^{ij} \left(\sum_{n=0}^{\infty} z^n \sum_{\ell \in \Le\multi{n}} (\zeta^{-j})^{\ell} \ell^p \right)\\ 
&= \frac{1}{N}\sum_{j=0}^{N-1}    \zeta^{ij}  \left(\left(  w \dfrac{\partial}{\partial w} \right)^{\!\!p} f(z,w) \right) \bigg|_{w=\overline{\zeta}^{j}} &&\text{(by \eqref{eq:fuzz})}\\
&= \frac{1}{N} \sum_{j=0}^{N-1} \zeta^{ij} F^p_{j,N}(z) &&\text{(by \eqref{eq:fm})}.
\end{align*}

\subsection{Rational representation}\label{subsection:rational}
We now represent $F^p_{j,N}(z)$ as an explicit rational function.
First, we require the identity
\begin{equation}\label{eq:partialidentity}
\frac{\partial^p}{\partial w^p} f(z,w)
= p! \left( \prod_{b=1}^k \frac{1}{1 - wz^{n_b}} \right) h_p \left( \frac{z^{n_1}}{1 - wz^{n_1}}, \cdots, \frac{z^{n_k}}{1 - wz^{n_k}} \right),
\end{equation}
which can be verified by induction \cite[Lem.~22]{GOOY}.  Second, recall that
the \emph{Stirling number of the second kind} $\stirling{n}{i}$ counts the number of partitions of 
$\{1, 2, \ldots, n\}$ into $i$ nonempty subsets. It is known that
\begin{equation}\label{eq:stirlingidentity}
\bigg(x \frac{d}{dx}\bigg)^{\!p}
= \sum_{i=0}^p \stirling{p}{i} x^i \frac{d^i}{dx^i}
\end{equation}
for $p \in \NN$ \cite{Carlitz1, Carlitz2, Toscano}.
Now compute
\begin{align}
F^p_{j,N}(z)
&= \bigg(  \bigg( \! w \frac{\partial}{\partial w} \! \bigg)^{\!\!p} \!\! f(z,w) \bigg) \bigg |_{w=\overline{\zeta}^j} && \text{(by \eqref{eq:fm})}\nonumber\\
&= \sum_{a=0}^p \stirling{p}{a} \, w^a \frac{\partial^a f}{\partial w^a} \, \bigg |_{w=\overline{\zeta}^j} \nonumber && \text{(by \eqref{eq:stirlingidentity})} \\
&= \scalebox{0.9}{$\displaystyle\sum_{a=0}^p \stirling{p}{a} \, a! \, w^a \left( \prod_{b=1}^k \frac{1}{1 - wz^{n_b}} \right) h_a \left( \frac{z^{n_1}}{1 - wz^{n_1}}, \cdots, \frac{z^{n_k}}{1 - wz^{n_k}} \right) \bigg |_{w=\overline{\zeta}^j}$} && \text{(by \eqref{eq:partialidentity})} \nonumber \\
&=  \scalebox{0.9}{$\displaystyle\sum_{a=0}^p \stirling{p}{a} \, a! \,\overline{\zeta}^{aj}
\left( \prod_{b=1}^k \frac{1}{1 - \overline{\zeta}^j z^{n_b}}  \right) h_a \left( \frac{z^{n_1}}{1 - \overline{\zeta}^j z^{n_1}}, \cdots, \frac{z^{n_k}}{1 - \overline{\zeta}^j z^{n_k}} \right)$}.\nonumber
\end{align}

\subsection{A crucial subgroup}
Consider the subgroup
\begin{equation}\label{eq:Gamma}
\Gamma = \{ t \in \ZZ/N\ZZ : n_1 t \equiv n_2 t \equiv \cdots \equiv n_k t \pmod{N}\}
\end{equation}
of $\ZZ/N\ZZ$.  Then $\Gamma$ is cyclic with generator  
$N / |\Gamma|$ and hence
\begin{equation}\label{eq:1w1}
n_1 \equiv n_2 \equiv \cdots \equiv n_k \pmod{|\Gamma|}.
\end{equation}
Now $m = \gcd(N,\delta)$ is the largest divisor of $N$ such that
\begin{equation}\label{eq:1w2}
n_1 \equiv n_2 \equiv \cdots \equiv n_k \pmod{m}.
\end{equation}
Then $N/m$ generates a subgroup $\Gamma'$ of $\ZZ/N \ZZ$ of order $m$.
In particular,
\begin{equation}\label{eq:2w1}
n_1 t \equiv n_2 t \equiv \cdots \equiv n_k t \pmod{N}
\end{equation}
for all $t \in \Gamma'$.  
The maximality of $m$, \eqref{eq:1w1}, and \eqref{eq:1w2} imply that
$|\Gamma| \leq |\Gamma'|$.  
On the other hand, \eqref{eq:Gamma} and \eqref{eq:2w1}
ensure that $\Gamma' \subseteq \Gamma$ and hence $|\Gamma'| \leq |\Gamma|$.
Since $\Gamma, \Gamma'$ are subgroups of $\ZZ/N\ZZ$ of the same order,
$\Gamma = \Gamma'$.  In particular, $|\Gamma| = m$.

\subsection{An automorphism}
The definition \eqref{eq:Gamma} of the group $\Gamma$ ensures that 
multiplication modulo $N$ by any of the numerical semigroup generators $n_1,n_2,\ldots,n_k$ yields the same homomorphism $\alpha:\Gamma \to \Gamma$.
We claim that $\alpha$ is an automorphism.
Since $\gcd(n_1,n_2,\ldots,n_k) =1$, there are $b_1,b_2,\ldots,b_k \in \ZZ$ such that
\begin{equation*}
b_1 n_1 + b_2 n_2 + \cdots + b_k n_k = 1.
\end{equation*}
If $r \in \Gamma$ and $\alpha(t) = r$, then
\begin{equation}\label{eq:nootie}
n_1 t \equiv n_2 t \equiv \cdots \equiv n_k t \equiv r \pmod{N}
\end{equation}
and hence
\begin{align*}
t
&= t(b_1n_1+b_2n_2 + \cdots+b_kn_k) \\
&= b_1 (n_1 t) + b_2 (n_2 t) + \cdots + b_k (n_k t) \\
&\equiv (b_1 + b_2 + \cdots + b_k)r \pmod{N}.
\end{align*}
In particular, the kernel of $\alpha$ is trivial and thus $\alpha$ is an automorphism.
Note that $r,t \in \ZZ/N \ZZ$ satisfy \eqref{eq:nootie} if and only if $r,t \in \Gamma$ and $\alpha(t) = r$.

\subsection{An exponential sum}
Since $\zeta^{ N/m}$ is a primitive $m$th root of unity, 
\begin{align}
\sum_{t \in \Gamma} \zeta^{i\alpha(t)-tn}
&= \sum_{t \in \Gamma} \zeta^{i (n_1 t) -tn} 
= \sum_{t \in \Gamma} \zeta^{t (i n_1 -n)} \nonumber\\
&= \sum_{a=1}^{m } \zeta^{ \frac{aN}{m}(i n_1 -n)}
= \sum_{a=1}^{m} (\zeta^{ \frac{N}{m}})^{a(i n_1 -n)} \nonumber \\
&=
\begin{cases}
m & \text{if $in_1 \equiv n \pmod{ m}$}, \\[5pt]
0 & \text{if $i n_1 \not\equiv n \pmod{ m}$}.
\end{cases}
\label{eq:Finale}
\end{align}

\subsection{Common zeros}
For $0 \leq r \leq N-1$ and $1 \leq i \leq k$, 
the polynomial
\begin{equation*}
\phi_r^i(z):=1-\o{\zeta}^r z^{n_i}
\end{equation*}
has zeros $\zeta^{\frac{r+sN}{n_i}}$ for $1 \leq s \leq n_i$.
These zeros are distinct because $r+sN \equiv r + s'N \pmod{Nn_i} $ implies 
$s \equiv s' \pmod{n_i}$, and hence $s=s'$.

\begin{Lemma}\label{Lemma:Common}
    Fix $r \in \{0,1,\ldots,N-1\}$.  Then
    $\zeta^t$ is a common zero of the polynomials
    \begin{equation}\label{eq:fiji}
        \phi_r^1(z),\,\,\phi_r^2(z),\ldots,\,\,\phi_r^k(z)
    \end{equation}
    if and only if $r,t \in \Gamma$ and $\alpha(t) = r$.
\end{Lemma}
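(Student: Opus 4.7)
The plan is to unpack what it means for $\zeta^t$ to be a zero of an individual factor $\phi_r^i(z)$, and then combine the resulting congruences to exactly recover the condition that $t\in\Gamma$ and $\alpha(t)=r$ (with $r\in\Gamma$ following automatically).

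First I would note that $\zeta^t$ is a root of $\phi_r^i(z)=1-\overline{\zeta}^r z^{n_i}$ if and only if $\overline{\zeta}^r\zeta^{n_i t}=1$, i.e., $\zeta^{n_i t - r}=1$, which in turn is equivalent to $n_i t\equiv r\pmod N$, since $\zeta$ is a primitive $N$th root of unity. Writing this condition out for each $i=1,2,\ldots,k$ shows that $\zeta^t$ is a common zero of $\phi_r^1(z),\phi_r^2(z),\ldots,\phi_r^k(z)$ if and only if
\begin{equation*}
n_1 t \equiv n_2 t \equiv \cdots \equiv n_k t \equiv r \pmod{N}.
\end{equation*}

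The next step is to interpret this system. The first $k-1$ congruences $n_1 t\equiv n_2 t\equiv\cdots\equiv n_k t\pmod N$ are precisely the defining condition of $\Gamma$ in \eqref{eq:Gamma}, so they are equivalent to $t\in\Gamma$. Given $t\in\Gamma$, the homomorphism $\alpha$ was defined so that $\alpha(t)$ is the common residue $n_i t\bmod N$, so the remaining equality $n_1 t\equiv r\pmod N$ becomes exactly $\alpha(t)=r$. Finally, since $\alpha$ maps $\Gamma$ into $\Gamma$, the equality $r=\alpha(t)$ forces $r\in\Gamma$, which matches the formulation of the lemma. Conversely, if $r,t\in\Gamma$ with $\alpha(t)=r$, then by the definition of $\alpha$ we have $n_i t\equiv r\pmod N$ for every $i$, so $\zeta^t$ is a common zero of all the $\phi_r^i(z)$.

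I do not expect a substantive obstacle here: the proof is essentially a direct translation between the multiplicative statement ``$\zeta^t$ is a common root'' and the additive congruences encoded in the definitions of $\Gamma$ and $\alpha$. The only mild care needed is to remember that $\alpha$ is defined via multiplication by any of the $n_i$ (this was shown to be well defined earlier), so that the single equation $\alpha(t)=r$ really packages the $k$ congruences $n_i t\equiv r\pmod N$ into one statement.
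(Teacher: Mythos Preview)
Your argument is correct, and it is actually more direct than the paper's own proof. You simply evaluate $\phi_r^i(\zeta^t)=1-\zeta^{n_i t-r}$ and read off the congruence $n_i t\equiv r\pmod N$; the paper instead lists all zeros of each $\phi_r^i$ in the form $\zeta^{(r+sN)/n_i}$, sets two such expressions equal, and invokes the identity $L=\lcm(L/n_1,\ldots,L/n_k)$ (with $L=\lcm(n_1,\ldots,n_k)$) to force the common value to be an integer multiple of $L$, arriving at the same system $n_i t\equiv r\pmod N$. From that point the translation into $t\in\Gamma$, $\alpha(t)=r$, and $r\in\Gamma$ is identical in both arguments. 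The paper's route has the incidental byproduct of showing that \emph{every} common zero of the $\phi_r^i$ is already an $N$th root of unity $\zeta^t$, not merely those assumed to have that form; your approach trades that extra information for a shorter proof, which is fine since only the stated lemma is used downstream.
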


\begin{proof}
    The polynomials \eqref{eq:fiji} have a common zero if and only if there are $s_1,s_2,\ldots,s_k \in \ZZ$ such that 
    \begin{equation*}
        \frac{r+s_1N}{Nn_1} = \frac{r+s_2N}{Nn_2} = \cdots = \frac{r+s_kN}{Nn_k},
    \end{equation*}
    or, equivalently
    \begin{equation}\label{eq:lcm}
        \frac{L}{n_1}(r+s_1N) = \frac{L}{n_2}(r+s_2N) = \cdots = \frac{L}{n_k}(r+s_kN),
    \end{equation}
    in which $L:=\lcm(n_1,n_2,\ldots,n_k)$.  
    Since $\gcd(n_1,n_2,\ldots,n_k)=1$, we have\footnote{This is because a prime power exactly divides $L$ if and only if it divides at least one, but not all, of $n_1,n_2,\ldots,n_k$.
    Consequently, it exactly divides the expression on the right-hand side of \eqref{eq:LLn}.}%
    \begin{equation}\label{eq:LLn}
        L=\lcm \left( \frac{L}{n_1}, \frac{L}{n_2},\ldots,\frac{L}{n_k} \right)
    \end{equation}
    and hence the integer \eqref{eq:lcm} is a multiple of $L$.  Thus,
    the polynomials \eqref{eq:fiji} have a common zero if and only if
    there is a $t \in \ZZ$ such that 
    \begin{equation}\label{eq:eqn}
        r+s_iN=tn_i \quad \text{for all $i=1,2,\ldots,k$};
    \end{equation} 
    that is, if and only if \eqref{eq:nootie} holds.
    This is equivalent to $r,t \in \Gamma$ and $\alpha(t)=r$.
\end{proof}

\subsection{A residue computation}
The maximum possible order for a pole of 
\begin{equation*}
F^p_{j,N}(z) =
\sum_{a=0}^p \stirling{p}{a} \, a! \,\overline{\zeta}^{aj}
\left( \prod_{b=1}^k \frac{1}{1 - \overline{\zeta}^j z^{n_b}}  \right) h_a \left( \frac{z^{n_1}}{1 - \overline{\zeta}^j z^{n_1}}, \cdots, \frac{z^{n_k}}{1 - \overline{\zeta}^j z^{n_k}} \right)
\end{equation*}
is $k+p$, which can only arise from the summand with $a = p$.
Lemma \ref{Lemma:Common} ensures that $\zeta^t$ is a pole of $F^p_{r,N}$
with order $k+p$ if and only if $r,t \in \Gamma$ and $\alpha(t) = r$.  In particular,
\begin{equation}\label{eq:1zkmFmjz}\qquad
\lim_{z\to \zeta^t} (1-\overline{\zeta}^t z)^{k+p} F^p_{j,N}(z)  = 0,\qquad \text{for $j \neq r$}.
\end{equation}

From \eqref{eq:F2}, we see that $F$
has a pole of order $k+p$ at $\zeta^t$ for each $t \in \Gamma$
and these are the only poles of $F$ of this (maximal) degree.
Write
\begin{equation}\label{eq:Fcg}
F(z) = \sum_{t \in \Gamma}\left( \frac{C_t}{(1 - \overline{\zeta}^t z)^{k+p}} \right)+ G(z),
\end{equation}
in which $C_t \neq 0$ and $G(z)$ is a rational function, all of whose poles are
$L$th roots of unity with order at most $k + p - 1$.  In particular,
\begin{equation*}
G(z) = \sum_{n=0}^\infty u(n) z^n
\end{equation*}
for some quasipolynomial $u(n)$ of degree at most $k + p - 2$ and whose period divides $L$.   Moreover, for each $t \in \Gamma$ we have
\begin{equation*}
\lim_{z \to \zeta^t} (1 - \overline{\zeta}^t z)^{k+p} G(z) = 0.
\end{equation*}
With $r = \alpha(t)$, we have
\begin{align*}
C_t
&= \lim_{z \to \zeta^t} (1-\overline{\zeta}^t z)^{k+p} F(z) \nonumber \\
&=\lim_{z \to \zeta^t} (1-\overline{\zeta}^t z)^{k+p} \left(  \frac{1}{N} \sum_{j=0}^{N-1} \zeta^{ij} F^p_{j,N}(z) \right) && \text{(by \eqref{eq:F2})} \nonumber \\
&= \frac{1}{N}   \sum_{j=0}^{N-1} \zeta^{ij}  \Big(\lim_{z \to \zeta^t} (1-\overline{\zeta}^t z)^{k+p} F^p_{j,N}(z) \Big)\nonumber  \\
&= \frac{\zeta^{ir}}{N}   \Big(\lim_{z \to \zeta^t} (1-\overline{\zeta}^t z)^{k+p} F_{r,N}^p(z) \Big)  &&\text{(by \eqref{eq:1zkmFmjz})} \nonumber \\
&= \scalebox{0.8}{$\displaystyle\frac{\zeta^{ir}}{N}\lim_{z \to \zeta^t}\left[    \sum_{a=0}^p \stirling{p}{a} \, a! \,\overline{\zeta}^{ar}\,(1 - \overline{\zeta}^t z)^{k+p}
\left( \prod_{b=1}^k \frac{1}{1 - \overline{\zeta}^r z^{n_b}}  \right) h_a \left( \frac{z^{n_1}}{1 - \overline{\zeta}^r z^{n_1}}, \cdots, \frac{z^{n_k}}{1 - \overline{\zeta}^r z^{n_k}} \right) \right]$ } \\
&= \scalebox{0.925}{$\displaystyle\frac{\zeta^{ir} \overline{\zeta}^{pr}p!}{N}\lim_{z \to \zeta^t} (1 - \overline{\zeta}^t z)^{k+p} \left( \prod_{b=1}^k \frac{1}{1 - \overline{\zeta}^r z^{n_b}} \right) h_p \left( \frac{z^{n_1}}{1 - \overline{\zeta}^r z^{n_1}}, \cdots, \frac{z^{n_k}}{1 - \overline{\zeta}^r z^{n_k}} \right) $} \\
&= \frac{\zeta^{ir} \overline{\zeta}^{pr}p!}{N}\lim_{z \to \zeta^t} \left( \prod_{b=1}^k \frac{1 - \overline{\zeta}^t z}{1 - \overline{\zeta}^r z^{n_b}} \right) h_p \left( z^{n_1}\frac{1 - \overline{\zeta}^t z}{1 - \overline{\zeta}^r z^{n_1}}, \cdots, z^{n_k}\frac{1-\overline{\zeta}^t z}{1 - \overline{\zeta}^r z^{n_k}} \right) \\
&= \frac{\zeta^{ir} \overline{\zeta}^{pr}p!}{N}\left( \prod_{b=1}^k \frac{\zeta^r}{\zeta^{n_b t}n_b}  \right) h_p \left( \frac{\zeta^r}{n_1},\ldots, \frac{\zeta^r}{n_k} \right)
&& (\text{L'H\^opital}) \nonumber \\
&= \frac{\zeta^{ir}  p!}{N}\left( \prod_{b=1}^k \frac{\zeta^r}{\zeta^{n_b t}n_b}  \right) h_p \left( \frac{1}{n_1},\ldots, \frac{1}{n_k} \right) \nonumber 
&& (\text{Def.~of $h_p$})\\
&= \frac{\zeta^{i \alpha(t)}  p!}{N(n_1 n_2 \cdots n_k)} h_p \left( \frac{1}{n_1},\ldots, \frac{1}{n_k} \right) \nonumber 
&& (\text{by \eqref{eq:nootie}}).
\end{align*}
Here are the details for the somewhat involved L'H\^opital step:
\begin{equation*}
\lim_{z \to \zeta^t}z^{n_i}  \frac{(1 - \overline{\zeta}^t z)}{1 - \overline{\zeta}^r z^{n_i}}
= \zeta^{n_i t} \lim_{z \to \zeta^t} \frac{1 - \overline{\zeta}^t z}{1 - \overline{\zeta}^r z^{n_i}} 
= \zeta^{n_i t} \frac{\overline{\zeta}^t}{n_i \overline{\zeta}^r (\zeta^t)^{n_i-1}} 
= \zeta^{n_i t} \frac{\overline{\zeta}^t \zeta^t}{n_i \overline{\zeta}^r \zeta^{n_i t}} 
= \frac{\zeta^r}{n_i}.
\end{equation*}

\subsection{Conclusion}

Now observe that
\begin{align*}
\frac{1}{(1 - \overline{\zeta}^t z)^{k+p}}
&= \sum_{n=0}^\infty \binom{n + k + p - 1}{k + p - 1} \overline{\zeta}^{tn}z^n \\
&= \sum_{n=0}^\infty \frac{(n + k + p - 1) \cdots (n + 1)}{(k + p - 1)!} \overline{\zeta}^{tn}z^n \\ 
&= \frac{1}{(k + p - 1)!} \sum_{n=0}^\infty \big(n^{k+p-1} + v(n)\big) \overline{\zeta}^{tn} z^n,
\end{align*}
in which $v(n)$ is a quasipolynomial of degree $k + p - 2$ with integer coefficients.
Our recent evaluation of $C_t$ and \eqref{eq:Fcg} imply
\begin{align*}
F(z)
&=\sum_{t \in \Gamma}\left( \frac{C_t}{(1 - \overline{\zeta}^t z)^{k+p}} \right)+ G(z)\\ 
&=\sum_{t \in \Gamma} \left[\frac{\zeta^{i\alpha(t)} p!}{N (n_1 n_2\cdots n_k)} h_p \left( \frac{1}{n_1}, \frac{1}{n_2}, \ldots, \frac{1}{n_k} \right)  \frac{1}{(1 - \overline{\zeta}^t z)^{k+p}} \right]+ G(z)\\
&= \frac{ p!h_p ( \frac{1}{n_1}, \ldots, \frac{1}{n_k} )}{N (n_1 n_2\cdots n_k)} 
\left(\sum_{t\in \Gamma}\frac{\zeta^{i\alpha(t)}}{(1 - \overline{\zeta}^t z)^{k+p}} \right) + G(z)\\
&= \frac{ p!h_p ( \frac{1}{n_1}, \ldots, \frac{1}{n_k} )}{N (n_1 n_2\cdots n_k)} 
\left(\sum_{t\in \Gamma} \zeta^{i\alpha(t)} \sum_{n=0}^{\infty} \big(n^{k+p-1} + v(n)\big) \overline{\zeta}^{tn}z^n\right) + \sum_{n=0}^{\infty} u(n)z^n\\
&=\frac{p! h_p ( \frac{1}{n_1}, \ldots, \frac{1}{n_k} )}{N(k + p - 1)! (n_1 \cdots n_k)}
\sum_{n=0}^{\infty}  (n^{k+p-1}+v(n)) \Big(\sum_{t \in \Gamma} \zeta^{i\alpha(t)-tn}\Big)z^n + \sum_{r\in \Gamma} u(z)z^n.
\end{align*}

The evaluation \eqref{eq:Finale} of the parenthesized exponential sum 
and the definition \eqref{eq:F1} of $F$ as the generating function 
for $\Lambda^p_{i,N}(n)$ reveal that
\begin{equation*}
\Lambda^p_{i,N}(n) 
=
\begin{cases}
\dfrac{p! \,m\, h_p ( \frac{1}{n_1}, \frac{1}{n_2}, \ldots, \frac{1}{n_k} )}{N(k + p - 1)! (n_1 n_2 \cdots n_k)}  n^{k+p-1} + w_i(n),
&\text{if $n \equiv in_1 \pmod{m}$},\\[8pt]
0 & \text{if $n \not\equiv in_1 \pmod{m}$},
\end{cases}
\end{equation*}
in which $w_i(n)$ is a quasipolynomial of degree at most $k+p-2$ whose coefficients have period dividing 
$N\lcm(n_1,n_2,\ldots,n_k)$.  Since $u(n)$ and $v(n)$ both have rational coefficients, so must $w(n)$.  
This concludes the proof. \qed

\section{Conclusion}\label{Section:Conclusion}
While this paper largely settles the matter of asymptotic modular distribution of factorization lengths 
for elements in numerical semigroups, a related question worthy of further research remains.
Can one characterize the rate of convergence in Theorem \ref{Theorem:Probability}?
This would, presumably, require a detailed examination of the quasipolynomial error term $w_i(n)$ 
in Theorem \ref{Theorem:Moments}
and its dependence upon $n_1,n_2,\ldots,n_k$, $N$, $i$, and $\delta$, along with the congruence class of $n$
modulo $N$.  
A careful study of the proof of Theorem \ref{Theorem:Moments}
might yield sufficiently explicit bounds upon the $w_i(n)$ to carry this out.

\bibliography{FLD4AS3-Modular}
\bibliographystyle{amsplain}

\end{document}